\theoremstyle{plain}
\newtheorem{thm}{Theorem}[section]
\newtheorem{lem}[thm]{Lemma}
\newtheorem{prop}[thm]{Proposition}
\theoremstyle{definition}
\newtheorem{defn}[thm]{Definition}
\begin{document}

\title{Petal number of torus knots of type \boldmath$(r,r+2)$}

\author[H. J. Lee]{Hwa Jeong Lee}
\address{Department of Mathematics Education, Dongguk University--Gyeongju, 123 Dongdae-ro, Gyeongju, Gyeongbuk, South Korea}
\email{hjwith@dongguk.ac.kr}
\author[G. T. Jin]{Gyo Taek Jin}
\address{Department of Mathematical Sciences, Korea Advanced Institute of Science and Technology, Daejeon 34141, Korea}
\email{trefoil@kaist.ac.kr}

\thanks{2020 Mathematics Subject Classification: 57K10}
\thanks{keywords: torus knot;  petal projection;  petal number;  grid diagram.}

%\date{\today\ / \currenttime}

\dedicatory{This work is dedicated to the memory of Vaughan Jones.}

\begin{abstract}
Let $r$ be an odd integer, $r\ge3$. Then the petal number of the torus knot of type $(r,r+2)$ is equal to $2r+3$.
\end{abstract}

\maketitle
%\tableofcontents

\section{introduction}

A knot diagram is a regular projection of a knot that has relative height information at each of the double points. In~\cite{Adams2013, Adams2014}, Adams introduced an $n$-crossing in a knot or link projection (also known as a multi-crossing when $n$ is not specified), which is a point such that $n$ strands pass straight through it and the relative heights of the strands are specified.
For each $n \geq 2$, every knot and link has a projection such that all crossings are $n$-crossings~\cite{Adams2013}. In~\cite{Adams2015_2}, it is proved that every knot has a projection with a single multi-crossing. 

\begin{defn} 
A {\em petal projection} of a knot $K$ is a projection of $K$ with a single multi-crossing such that there are no nesting loops. The {\em petal number}, $p(K)$, is the minimum number of loops among all petal projections of $K$, or equivalently, the minimum number of strands passing through the single multi-crossing. 
Suppose we have a petal projection with $n$ loops. 
We label the strands passing through the $n$-crossing with $1,2,\ldots, n$ from top to bottom.
From one end of the top strand we read the labels clockwise half way around the crossing. 
This sequence of labels is called the {\em petal permutation} of the petal projection.  
\end{defn}

\begin{figure}[h]
\begin{center}
\includegraphics[scale=0.52]{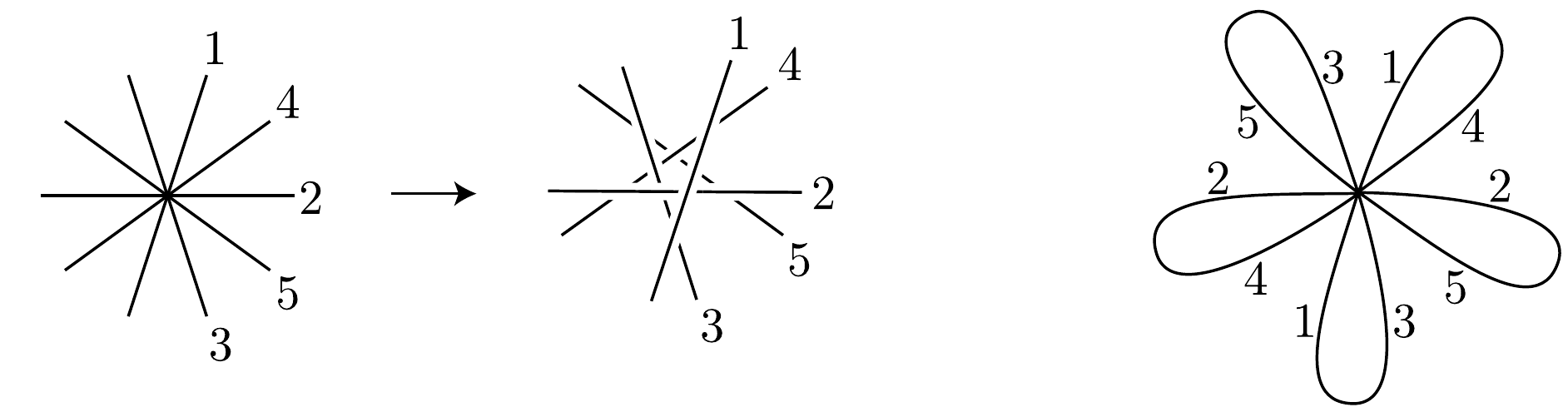}
\caption{A $5$-crossing and a petal projection of a trefoil knot.}
\label{fig:5-crossing}
\end{center}
\end{figure}

The left side of Figure~\ref{fig:5-crossing} shows an example of a $5$-crossing.
The right one shows a petal projection of the left-handed trefoil with the petal permutation, $(1,4,2,5,3)$.

For a pair of relatively prime  integers  $r,s$ with $2\leq r<s$, let $T_{r,s}$ denote the torus knot of type $(r,s)$. 
In~\cite{Adams2015_2}, Adams et al. ~showed that $p(T_{r,r+1})=2r+1$. 
In this paper, we consider torus knots of type $(r,r+2)$ and prove the following theorem.

\begin{thm}\label{thm:main} 
Let $r$ be an odd integer and $r\ge3$. Then
$p\left(T_{r,r+2}\right)=2r+3$.
\end{thm}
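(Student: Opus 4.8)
The plan is to prove the two inequalities $p(T_{r,r+2})\ge 2r+3$ and $p(T_{r,r+2})\le 2r+3$ separately, using throughout the bridge between petal projections and grid diagrams (equivalently, arc presentations): a petal permutation of length $n$ can be read off as an $n\times n$ grid diagram, and conversely a suitable grid diagram can be repackaged into a petal projection. Before either bound, I would record two standing facts. First, the petal number of a knot is odd, since tracing the single component of the knot through the non-nested loops forces the length of the petal permutation to be odd. Second, the hypothesis that $r$ is odd is exactly the condition $\gcd(r,r+2)=1$ under which $T_{r,r+2}$ is a knot, and the arc index (minimal grid number) of a torus knot satisfies $\alpha(T_{r,s})=r+s$, so that $\alpha(T_{r,r+2})=2r+2$.

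For the \emph{lower bound} I would establish the inequality $\alpha(K)\le p(K)$ valid for every knot $K$. The idea is that a petal projection with $n$ loops, whose loops are non-nested and whose $n$ strands are totally ordered by height, converts directly into a grid diagram of grid number at most $n$ carrying the same knot: the $n$ strand heights provide the horizontal lines, the cyclic order in which the petals leave the multi-crossing provides the vertical lines, and the non-nesting condition guarantees that the resulting markings assemble into a legal grid. Minimizing over all petal projections then yields $\alpha(K)\le p(K)$. Applying this to $K=T_{r,r+2}$ gives $2r+2=\alpha(T_{r,r+2})\le p(T_{r,r+2})$; since $p(T_{r,r+2})$ is odd while $2r+2$ is even, this improves to $p(T_{r,r+2})\ge 2r+3$.

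For the \emph{upper bound} I would exhibit an explicit petal permutation $\sigma_r$ of length $2r+3$ and prove that the knot it determines is $T_{r,r+2}$. Guided by the permutation realizing $p(T_{r,r+1})=2r+1$, I would write down a patterned permutation of length $2r+3$, convert it into its grid diagram, and identify that grid diagram with a standard grid presentation of the $(r,r+2)$ torus knot, for instance by reducing the grid to the closed braid $(\sigma_1\sigma_2\cdots\sigma_{r-1})^{\,r+2}$. This produces a petal projection with $2r+3$ loops, giving $p(T_{r,r+2})\le 2r+3$, and combined with the lower bound it proves the theorem.

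The main obstacle I expect is the upper bound: designing a single petal permutation that works uniformly for every odd $r\ge 3$ and then rigorously certifying its knot type, since recovering the knot type from a petal permutation is delicate and must be controlled for the entire family at once, plausibly by induction on $r$ or by tracking the grid-to-braid reduction explicitly. The lower bound, by contrast, reduces to the clean conversion lemma $\alpha(K)\le p(K)$ together with the parity of the petal number, and I would expect it to be short once the petal-to-grid conversion is stated with care.
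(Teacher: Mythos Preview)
Your lower bound argument is correct and essentially identical to the paper's: the paper cites $\alpha(T_{r,s})=r+s$ and the inequality $p(K)\ge\alpha(K)+1$ when $\alpha(K)$ is even, which is exactly your $\alpha\le p$ plus parity.

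The upper bound, however, is where the proposal remains only a plan. You correctly identify the obstacle but do not surmount it: no candidate permutation is written down, and ``reducing the grid to the closed braid $(\sigma_1\cdots\sigma_{r-1})^{r+2}$'' is not a step one can carry out without first knowing the permutation and then performing a nontrivial sequence of grid (Cromwell) moves that must be specified. The paper avoids the guess-and-verify difficulty entirely by working in the opposite direction. Writing $r=2n+1$, it proves a braid conjugacy lemma: in $B_r$, the braid $\Delta^2\tau^2$ (whose closure is $T_{r,r+2}$, since $\Delta^2=\tau^r$) is conjugate to $\Delta^2\tau(\sigma_{n+1}\cdots\sigma_{2n})(\sigma_{2n}\cdots\sigma_{n+1})$. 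The closure of this conjugate admits a $(2r+3)$-stick grid diagram, and after two explicit stick-isotopies and two cyclic grid moves one obtains a grid with the characteristic structure of a petal diagram (a single ``inflection'' vertical stick, all other horizontals of length $r+1$ or $r+2$). The petal permutation is then \emph{read off} rather than guessed:
\[
(1,\,3n{+}4,\,n{+}2,\,3n{+}3,\,\ldots,\,2,\,2n{+}3,\,4n{+}5,\,2n{+}2,\,\ldots,\,3n{+}6,\,n{+}3,\,3n{+}5).
\]
So the missing idea in your proposal is precisely this conjugacy lemma and the accompanying grid manipulation; without them, the upper bound is asserted but not proved.
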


%========================================
\section{Petal number and arc index}\label{sec:arc_index}
Given a petal projection of a knot $K$ with $n$ loops,  we can embed the loops on $n$ half planes with polar angles $0<\theta_1<\theta_2<\cdots<\theta_n<2\pi$ one by one.
Such an embedding of one arc on one half plane is called an arc presentation.  The minimal number of half planes needed for an arc presentation is called the arc index, denoted by $\alpha(K)$~\cite{C}.
It is obvious that $p(K)\ge\alpha(K)$ for any knot $K$.

A grid diagram is a knot diagram which is composed of horizontal sticks and vertical sticks such that at every crossing the vertical stick crosses over the horizontal stick.
The arc index $\alpha(K)$ is the same as the minimal number of vertical sticks among all grid diagrams of $K$.

\begin{figure}[h]
\includegraphics[width=0.12\textwidth]{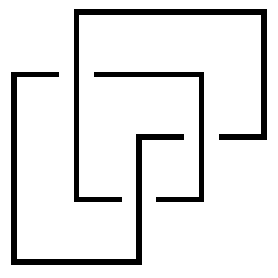}
\caption{Minimal grid diagram of a left-handed trefoil}
\end{figure}

\begin{prop}\cite{Adams2015_2}\label{prop:petal_l_bound}
Let $K$ be a nontrivial knot. Then 
$$p(K)\geq 
\begin{cases}
\alpha(K) \qquad &{\text{if \ }} \alpha(K) {\text{ is odd}},\\
\alpha(K)+1 \qquad &{\text{if \ }} \alpha(K) {\text{ is even}}.
\end{cases}$$
\end{prop}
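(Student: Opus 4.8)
The inequality $p(K)\ge\alpha(K)$ is already recorded in the text, via the passage from a petal projection with $p(K)$ loops to an arc presentation on $p(K)$ half planes. So the only real content is to exclude the equality $p(K)=\alpha(K)$ when $\alpha(K)$ is even; equivalently, I would prove the sharper statement that \emph{the petal number of a nontrivial knot is always odd}. Granting this, the proposition is immediate: if $\alpha(K)$ is odd there is nothing to add, while if $\alpha(K)$ is even then $p(K)\ge\alpha(K)$ together with the parity of $p(K)$ forces $p(K)\ne\alpha(K)$, hence $p(K)\ge\alpha(K)+1$.

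To see that the number of loops in any petal projection of a knot is odd, I would first reduce the question to a purely combinatorial count of components. Around the single $n$-crossing the $n$ strands cut out $2n$ ray-ends; label them $0,1,\dots,2n-1$ in clockwise cyclic order. Since each strand runs straight through the crossing, its two ends are diametrically opposite, so the strands realize the ``antipodal'' matching $S$ that joins $k$ to $k+n \pmod{2n}$. Since the diagram has only the one multi-crossing the petals cannot cross one another, and the no-nesting hypothesis then forces the petals to realize the adjacent matching $P$ pairing $2j$ with $2j+1$. The underlying immersed curve is then the $2$-regular graph $S\cup P$, and the number of components of the knot equals the number of cycles of this graph, a quantity that depends only on $n$ and not on the heights (the heights record over/under data, hence the knot type, but not the number of components).

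It then remains to count the cycles of $S\cup P$. Following the curve, each petal step advances the ``petal index'' by the inverse of $2$ modulo $n$: when $n$ is odd this step size is a unit, so $S\cup P$ is a single $n$-cycle and the projection is a knot, whereas when $n$ is even and $n\ge 4$ the walk closes up prematurely and $S\cup P$ splits into $n/2\ge 2$ cycles, giving a genuine link. The only exceptional case is $n=2$, which does close up into one component but carries a single ordinary crossing and hence is the unknot. Consequently a \emph{nontrivial} knot admits petal projections only with an odd number of loops, so $p(K)$ is odd, completing the argument.

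I expect the main obstacle to be the second paragraph: pinning down that the no-nesting condition, combined with the single multi-crossing and the straight-through strands, rigidly determines the two matchings $S$ and $P$ up to an irrelevant rotation, and hence that the component count is governed by $n$ alone. Once the model is fixed, the cycle count in the third paragraph is a short modular-arithmetic computation, and the reduction in the first paragraph is formal.
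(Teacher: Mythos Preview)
The paper does not give a proof of this proposition; it is quoted from Adams et al.\ \cite{Adams2015_2} and used as a black box for the lower bound. So there is no in-paper argument to compare against.

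That said, your proposal is correct and is essentially the argument one finds in the cited source: reduce to showing that any petal projection of a nontrivial knot has an odd number of loops, by observing that with the strands forced to be antipodal and the (non-nested, non-crossing) petals forced to join adjacent ray-ends, the component count of the underlying curve depends only on $n$. Your cycle count checks out: for odd $n=2m+1$ the induced permutation on petal indices is translation by $m+1\equiv 2^{-1}$ (or by $-2^{-1}$ in the other orientation), a unit, giving one component; for even $n\ge 4$ each cycle in $S\cup P$ has length exactly $4$, so there are $n/2\ge 2$ components; and $n=2$ yields a one-crossing diagram, hence the unknot. One small quibble: the phrase ``advances the petal index by the inverse of $2$'' is only literally meaningful for odd $n$; in the even case you should simply compute the cycle length directly (as you effectively do when asserting $n/2$ components) rather than appeal to a step size. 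You correctly identify the only nontrivial point as the rigidity of the two matchings.
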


\begin{prop}\cite{EH}\label{prop:arc_torus}
Let $r$ and $s$ be relatively prime  integers such that $2\le r<s$. Then
$$\alpha\left(T_{r,s}\right)=r+s.$$
\end{prop}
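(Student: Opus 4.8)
The plan is to prove the two inequalities $\alpha(T_{r,s})\le r+s$ and $\alpha(T_{r,s})\ge r+s$ separately, using the characterization of $\alpha$ as the minimal number of vertical sticks in a grid diagram. For the upper bound I would exhibit an explicit grid diagram of $T_{r,s}$ with exactly $r+s$ vertical sticks. Viewing $T_{r,s}$ as a curve of slope $s/r$ on the standard torus and cutting along a meridian and a longitude, the knot becomes a family of straight segments in a fundamental square; reassembling these into a rectangular diagram in the usual way yields a ``staircase'' grid whose vertical sticks number $r+s$, exactly as the minimal grid diagram for $(r,s)=(2,3)$ illustrates. Since $\alpha(K)$ is the minimum of the number of vertical sticks over all grid diagrams, this gives $\alpha(T_{r,s})\le r+s$ immediately, and this direction is routine.

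The lower bound is the substantive direction, and I would obtain it through the relationship between grid diagrams and Legendrian knots. From any grid diagram of grid number $n$ one reads off two Legendrian fronts, one representing $K$ and one representing its mirror image $mK$, whose Thurston--Bennequin numbers satisfy $tb(L)+tb(L')=-n$. Since each of these is bounded above by the maximal Thurston--Bennequin number of the knot it represents, minimizing over all grid diagrams yields the general inequality
$$\alpha(K)\ \ge\ -\overline{tb}(K)-\overline{tb}(mK),$$
where $\overline{tb}$ denotes the maximal Thurston--Bennequin number. It therefore suffices to compute these two contact invariants for the torus knot and its mirror.

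This is precisely where I would invoke \cite{EH}: the maximal Thurston--Bennequin numbers of torus knots are determined there, giving $\overline{tb}(T_{r,s})=rs-r-s$ and $\overline{tb}(mT_{r,s})=-rs$. Substituting, $-\overline{tb}(T_{r,s})-\overline{tb}(mT_{r,s})=-(rs-r-s)-(-rs)=r+s$, so $\alpha(T_{r,s})\ge r+s$, and combining with the upper bound proves the equality. The main obstacle is the sharp value $\overline{tb}(mT_{r,s})=-rs$ for the \emph{negative} torus knot: the genus-based Bennequin estimate only gives $\overline{tb}(mT_{r,s})\le 2g-1=rs-r-s$, which is far from sharp, so the true value $-rs$ lies well below any naive bound and requires the convex-surface classification of tight contact structures on the relevant solid-torus complements carried out in \cite{EH}. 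As an alternative route that avoids contact geometry, one can instead bound $\alpha$ below by the Morton--Beltrami inequality $\alpha(K)\ge \operatorname{spread}_a F_K+2$ and verify that the Kauffman polynomial of $T_{r,s}$ has $a$-spread $r+s-2$; in that approach the difficulty migrates to controlling the extreme $a$-degree terms of the Kauffman polynomial of the torus knot.
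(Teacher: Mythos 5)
The paper offers no proof of this proposition—it is quoted directly from Etnyre--Honda \cite{EH}—and your reconstruction is precisely the standard argument underlying that citation: the routine upper bound from an explicit staircase grid diagram with $r+s$ vertical sticks, and the lower bound from the grid-diagram identity $tb(L)+tb(L')=-n$ (equivalently, Matsuda's inequality $\alpha(K)\ge -\overline{tb}(K)-\overline{tb}(mK)$) combined with the Legendrian classification values $\overline{tb}(T_{r,s})=rs-r-s$ and $\overline{tb}(mT_{r,s})=-rs$, which indeed give $-(rs-r-s)+rs=r+s$. Your argument is correct, and you rightly locate the hard content in the convex-surface-theoretic computation of $\overline{tb}$ for the negative torus knot; the only caveat is one of attribution, since the inequality $\alpha(K)\ge -\overline{tb}(K)-\overline{tb}(mK)$ is due to Matsuda (and the grid-diagram proof to Ng) rather than to \cite{EH} itself, which supplies only the two $\overline{tb}$ values.
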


%========================================
\section{Proof of the main theorem}\label{sec:proof}
By Propositions \ref{prop:petal_l_bound} and \ref{prop:arc_torus}, we know that
$p(T_{r,r+2})\ge2r+3$ if $r\ge3$ is an odd integer. 
Now we show that $p(T_{r,r+2})\le2r+3$.

Let $r=2n+1$ for a positive integer $n$ and let $\sigma_i, i=1,2,\ldots,2n$, denote the standard generators of  the braid group $B_r$ of $r$ strings. 

\centerline{\includegraphics[width=0.45\textwidth]{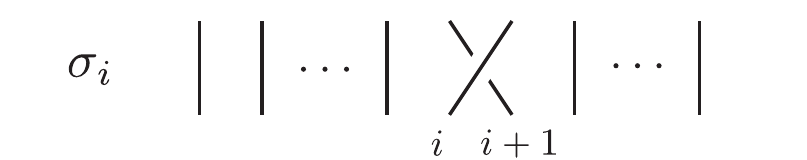}}

\noindent
Let $\Delta=\Delta_r$ denote the positive half-twist of $r$ strings and let 
$$\tau=\tau_r=\prod_{i=1}^{2n}\sigma_i=\sigma_1\sigma_2\cdots\sigma_{2n}.$$

We introduce some braid relations needed to prove Lemma~\ref{lem:conjugate}.
\begin{equation}
\begin{aligned}
\sigma_i\sigma_j\sigma_i^\varepsilon&=\sigma_j^\varepsilon\sigma_i\sigma_j 
&&|i-j|=1,\ \varepsilon=\pm1,\ 1\le i,j\le 2n\\
\sigma_i\sigma_j^\varepsilon&=\sigma_j^\varepsilon\sigma_i  &&|i-j|>1,\ \varepsilon=\pm1,\ 1\le i,j\le 2n
\end{aligned}
\end{equation}
The above  are extended from the standard defining relations of $B_r$.
\begin{equation}
\sigma_i^\varepsilon\Delta^2=\Delta^2\sigma_{i}^\varepsilon \qquad
\varepsilon=\pm1,\ 1\le i \le 2n
\end{equation}
In fact, the positive  full-twist $\Delta^2$ commutes with any braid.
\begin{equation}
\sigma_i^\varepsilon\tau=\tau\sigma_{i-1}^\varepsilon\qquad
 \varepsilon=\pm1,\ 2\le i\le 2n
 \end{equation}
An example of the above is shown below.  Notice that we read braid words from the bottom.

\centerline{\includegraphics[width=0.35\textwidth]{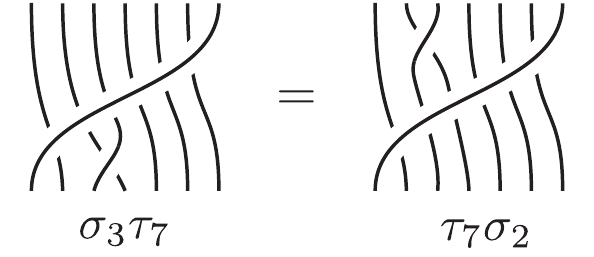}}

\begin{lem}\label{lem:conjugate}
In $B_r$, the  braid
$$\Delta^2\tau
(\sigma_{n+1}\sigma_{n+2}\cdots\sigma_{2n})(\sigma_{2n}\sigma_{2n-1}\cdots\sigma_{n+1})$$
 is a conjugate of $\Delta^2\tau^2$.
\end{lem}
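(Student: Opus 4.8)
The plan is to strip away the central factor first, then normalize the ``up‑and‑down'' block, and finally match it against a second copy of $\tau$. Write $\alpha \sim \beta$ for ``$\alpha$ is conjugate to $\beta$ in $B_r$''. Since $\Delta^2$ is central (the relation $\sigma_i^\varepsilon\Delta^2=\Delta^2\sigma_i^\varepsilon$, extended to all of $B_r$), conjugation by any $w$ sends $\Delta^2\tau^2$ to $\Delta^2\,w\tau^2w^{-1}$, and conversely the central $\Delta^2$ cancels. Hence
$$\Delta^2\tau(\sigma_{n+1}\cdots\sigma_{2n})(\sigma_{2n}\cdots\sigma_{n+1})\sim\Delta^2\tau^2\quad\Longleftrightarrow\quad \tau(\sigma_{n+1}\cdots\sigma_{2n})(\sigma_{2n}\cdots\sigma_{n+1})\sim\tau^2.$$
So it suffices to prove the conjugacy with the full twist removed, and I would record this reduction at the outset.

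Next I would use the shift relation $\sigma_i^\varepsilon\tau=\tau\sigma_{i-1}^\varepsilon$, which is exactly the statement that conjugation by $\tau$ raises indices: $\tau\sigma_k\tau^{-1}=\sigma_{k+1}$ for $1\le k\le 2n-1$. Iterating $n$ times is legal on the indices $1,\dots,n$ (they stay $\le 2n-1$ throughout), so, with $P=\sigma_1\cdots\sigma_n$ and $Q=\sigma_n\cdots\sigma_1$,
$$A:=\sigma_{n+1}\cdots\sigma_{2n}=\tau^n P\tau^{-n},\qquad B:=\sigma_{2n}\cdots\sigma_{n+1}=\tau^n Q\tau^{-n}.$$
Substituting gives $\tau AB=\tau^{n+1}PQ\,\tau^{-n}=\tau^n(\tau PQ)\tau^{-n}$, so $\tau AB\sim\tau PQ$. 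This is the clean half of the argument: it slides the turn‑around block from the top $n$ strands down to the bottom $n$ strands, leaving $\tau\,(\sigma_1\cdots\sigma_n)(\sigma_n\cdots\sigma_1)$.

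It then remains to show $\tau PQ\sim\tau^2$, and this is where I expect the real work. Note $PQ=\sigma_1\cdots\sigma_{n-1}\sigma_n^2\sigma_{n-1}\cdots\sigma_1$ is a \emph{pure} braid of the same word length as the second $\tau$, while $\tau^2=\tau\,(\sigma_1\cdots\sigma_{2n})$; so one must trade the length‑$2n$ palindrome $PQ$ for a single ascending run $\sigma_1\cdots\sigma_{2n}$, all while carrying the leading $\tau$. I would attempt this by induction on $n$: the base case $n=1$ is $\sigma_1\sigma_2\sigma_1^2\sim(\sigma_1\sigma_2)^2$, immediate from $\sigma_1\sigma_2\sigma_1=\sigma_2\sigma_1\sigma_2$ together with cyclic permutation of the word. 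For the inductive step I would push the central $\sigma_n^2$ outward using the braid and far‑commutation relations and repeated cyclic conjugation, rebuilding the ascending run one generator at a time; the inner flip automorphism $\sigma_i\mapsto\sigma_{2n+1-i}$, which is conjugation by $\Delta$ and therefore preserves conjugacy classes, can be invoked to symmetrize and shorten the reduction. The main obstacle is precisely the index bookkeeping in this telescoping: each braid move adjusts only one index, the run must be extended without ever calling on a nonexistent $\sigma_{2n+1}$, and the leading $\tau$ must be kept intact, so the intermediate words need to be controlled carefully at each step.
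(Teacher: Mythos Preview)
Your reduction is correct and tidy: stripping the central $\Delta^2$ and then using the shift $\tau\sigma_k\tau^{-1}=\sigma_{k+1}$ to conjugate the up--down block from strands $n{+}1,\dots,2n$ down to strands $1,\dots,n$ genuinely reduces the lemma to the single claim $\tau PQ\sim\tau^2$ with $P=\sigma_1\cdots\sigma_n$, $Q=\sigma_n\cdots\sigma_1$. That is a real simplification over the paper, which keeps $\Delta^2$ present throughout.

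The gap is that the remaining claim $\tau PQ\sim\tau^2$ is the whole content of the lemma, and you do not prove it. Your proposed ``induction on $n$'' is not an induction in the usual sense: the ambient group $B_{2n+1}$ changes with $n$, the element $\tau$ changes with it, and your base case $n=1$ lives in $B_3$, so there is no mechanism by which the $(n{-}1)$--case feeds into the $n$--case. What is actually needed is an induction \emph{inside the fixed group} $B_r$, and that is exactly what the paper supplies: it fixes $r=2n+1$, writes down explicit conjugators $c_k=\sigma_{k+1}\sigma_{k+3}\cdots\sigma_{2n-k+1}$ for $k=1,\dots,n$, and verifies that successively conjugating $\Delta^2\tau^2$ by $c_1,\dots,c_k$ gives $\Delta^2\tau(\sigma_k^{-1}\cdots\sigma_1^{-1})\tau(\sigma_{2n}\cdots\sigma_{2n-k+1})$; at $k=n$ the identity $(\sigma_n^{-1}\cdots\sigma_1^{-1})\tau=\sigma_{n+1}\cdots\sigma_{2n}$ finishes. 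Translated through your reduction, this is precisely the ``index bookkeeping'' you identify as the obstacle, carried out with explicit conjugating words rather than described. Your sketch (``push $\sigma_n^2$ outward, rebuild the ascending run one generator at a time'') gestures at such an internal telescoping but provides neither the conjugators nor the intermediate forms, so as it stands the argument does not close.
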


\begin{proof}
In the cases $n=1$ and $n=2$, the lemma is verified by the following computation.
$$\begin{aligned}
(\sigma_2^{-1})\Delta^2\tau^2(\sigma_2)
&=\Delta^2\tau\sigma_1^{-1}\tau(\sigma_2)\\
&=\Delta^2\tau(\sigma_2)(\sigma_2)\\
(\sigma_3^{-1}\sigma_4^{-1}\sigma_2^{-1})\Delta^2\tau^2(\sigma_2\sigma_4\sigma_3)
&=\Delta^2\tau(\sigma_2^{-1}\sigma_3^{-1}\sigma_1^{-1})\tau(\sigma_2\sigma_4\sigma_3)\\
&=\Delta^2\tau(\sigma_2^{-1}\sigma_1^{-1})\tau\sigma_2^{-1}(\sigma_2\sigma_4\sigma_3)\\
&=\Delta^2\tau(\sigma_3\sigma_4)(\sigma_4\sigma_3)
\end{aligned}$$
Let $n\ge3$ and let 
$$\begin{aligned}
c_k&=\sigma_{k+1}\sigma_{k+3}\cdots\sigma_{2n-k+1}, \quad k=1,\ldots,n\\
\beta_0&=\Delta^2\tau^2\\
\beta_1&=c_1^{-1}\beta_0c_1\\
\beta_k&=c_k^{-1}\beta_{k-1}c_k,\quad k=2,\ldots,n.
\end{aligned}$$
We compute $\beta_1,\beta_2,\ldots,\beta_n$ inductively.
$$\begin{aligned}
\beta_1&=(\sigma_{2n}^{-1}\sigma_{2n-2}^{-1}\cdots\sigma_{2}^{-1})
\Delta^2\tau^2(\sigma_{2}\sigma_{4}\cdots\sigma_{2n})\\
&=\Delta^2\tau\sigma_{1}^{-1}\tau(\sigma_{2n-2}^{-1}\cdots\sigma_{2}^{-1})(\sigma_{2}\sigma_{4}\cdots\sigma_{2n})\\
&=\Delta^2\tau(\sigma_1^{-1})\tau(\sigma_{2n})\\
\beta_2&=(\sigma_{2n-1}^{-1}\sigma_{2n-3}^{-1}\cdots\sigma_{3}^{-1})\Delta^2\tau(\sigma_1^{-1})\tau(\sigma_{2n})(\sigma_{3}\sigma_{5}\cdots\sigma_{2n-1})\\
&=\Delta^2\tau(\sigma_{2n-2}^{-1}\sigma_{2n-4}^{-1}\cdots\sigma_{2}^{-1})
\sigma_1^{-1}\tau(\sigma_{2n})(\sigma_{3}\sigma_{5}\cdots\sigma_{2n-1})\\
&=\Delta^2\tau(\sigma_2^{-1}\sigma_1^{-1})\tau(\sigma_{2n})(\sigma_{2n-3}^{-1}\cdots\sigma_{3}^{-1})(\sigma_{3}\sigma_{5}\cdots\sigma_{2n-1})\\
&=\Delta^2\tau(\sigma_2^{-1}\sigma_1^{-1})\tau(\sigma_{2n}\sigma_{2n-1})
\end{aligned}$$
Suppose that 
$$\beta_{k}=\Delta^2\tau(\sigma_k^{-1}\cdots\sigma_2^{-1}\sigma_1^{-1})\tau(\sigma_{2n}\sigma_{2n-1}\cdots\sigma_{2n-k+1}),
$$
for $k<n$. Then
$$\begin{aligned}
\beta_{k+1}&=(\sigma_{2n-k}^{-1}\sigma_{2n-k-2}^{-1}\cdots\sigma_{k+2}^{-1})
\Delta^2\tau(\sigma_k^{-1}\cdots\sigma_2^{-1}\sigma_1^{-1})\tau\\
&\qquad\qquad\qquad\qquad (\sigma_{2n}\sigma_{2n-1}\cdots\sigma_{2n-k+1})(\sigma_{k+2}\sigma_{k+4}\cdots\sigma_{2n-k})\\
&=\Delta^2\tau(\sigma_{2n-k-1}^{-1}\sigma_{2n-k-3}^{-1}\cdots\sigma_{k+1}^{-1})(\sigma_k^{-1}\cdots\sigma_2^{-1}\sigma_1^{-1})\tau\\
&\qquad\qquad\qquad\qquad (\sigma_{2n}\sigma_{2n-1}\cdots\sigma_{2n-k+1})(\sigma_{k+2}\sigma_{k+4}\cdots\sigma_{2n-k})\\
&=\Delta^2\tau(\sigma_{k+1}^{-1}\sigma_k^{-1}\cdots\sigma_2^{-1}\sigma_1^{-1})\tau
(\sigma_{2n-k-2}^{-1}\sigma_{2n-k-4}^{-1}\cdots\sigma_{k+2}^{-1})\\
&\qquad\qquad\qquad\qquad (\sigma_{2n}\sigma_{2n-1}\cdots\sigma_{2n-k+1})(\sigma_{k+2}\sigma_{k+4}\cdots\sigma_{2n-k})\\&=\Delta^2\tau(\sigma_{k+1}^{-1}\sigma_k^{-1}\cdots\sigma_2^{-1}\sigma_1^{-1})\tau
(\sigma_{2n}\sigma_{2n-1}\cdots\sigma_{2n-k+1}\sigma_{2n-k})\\
\end{aligned}$$
Inductively, we obtain
$$\begin{aligned}\beta_n&=\Delta^2\tau(\sigma_{n}^{-1}\sigma_{n-1}^{-1}\cdots\sigma_2^{-1}\sigma_1^{-1})\tau(\sigma_{2n}\sigma_{2n-1}\cdots\sigma_{n+1})\\
&=\Delta^2\tau(\sigma_{n+1}\sigma_{n+2}\cdots\sigma_{2n})(\sigma_{2n}\sigma_{2n-1}\cdots\sigma_{n+1})
\end{aligned}$$
Since $\beta_n$ is a conjugate of $\beta_0$, we are done.\end{proof}

Notice that $\tau^r$ is also a positive full-twist of $r$ strings.  Therefore
$\Delta^2\tau^2=\tau^{r+2}$,  showing that the closure of $\Delta^2\tau^2$ is the torus knot $T_{r,r+2}$.

\begin{figure}[h]
\includegraphics[width=0.7\textwidth]{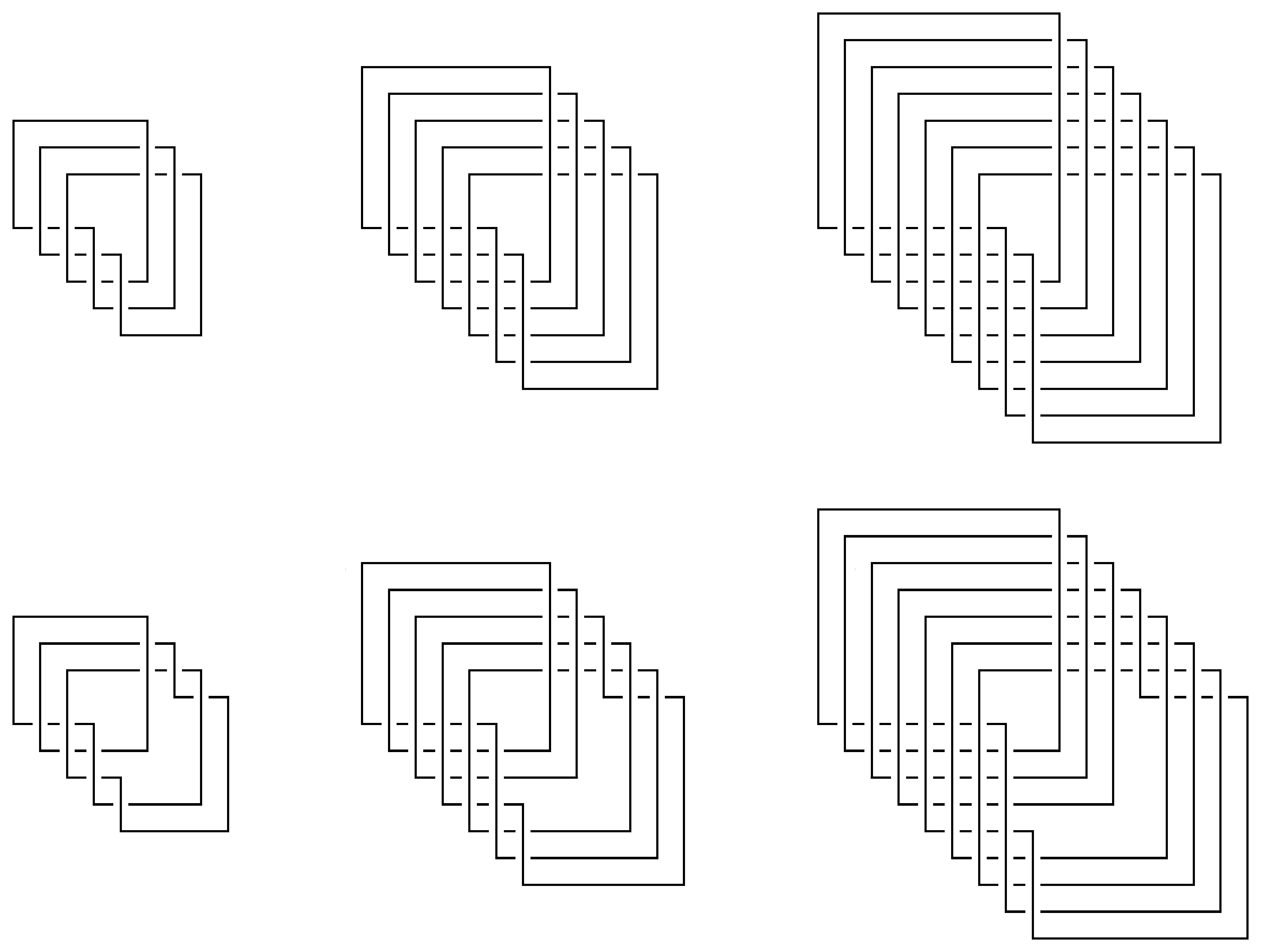}
\caption{Grid diagrams of  $T_{3,5}$, $T_{5,7}$, and $T_{7,9}$}
\label{fig:6grids}
\end{figure}

In the first row of Figure~\ref{fig:6grids}, we have minimal grid diagrams of $T_{3,5}$, $T_{5,7}$, and $T_{7,9}$.  The top-right parts of them are a positive half-twist of 3, 5, and 7 strings, respectively.  Reading counterclockwise from the bottom of the half-twists, we see that they are closures of the braids $\Delta^2\tau^2$ with $r=3,5$, and $7$, respectively.
In the second row,  we have grid diagrams with one more vertical and horizontal sticks which are  the closures of $\Delta^2\tau(\sigma_2)(\sigma_2)$,
$\Delta^2\tau(\sigma_3\sigma_4)(\sigma_4\sigma_3)$, and $\Delta^2\tau(\sigma_4\sigma_5\sigma_6)(\sigma_6\sigma_5\sigma_4)$. By Lemma~\ref{lem:conjugate}, they are the same torus knots as in the first row.

To enhance visual understanding,  we introduce some symbols to simplify grid diagrams as shown in Figure~\ref{fig:symbols}. They are used in Figure~\ref{fig:conjugate} which visualizes Lemma~\ref{lem:conjugate}.

\begin{figure}[h]
\includegraphics[width=\textwidth]{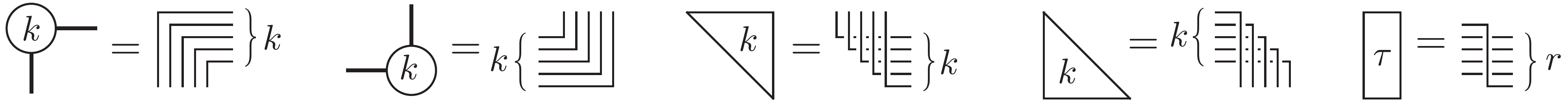}
\caption{symbolic parts of grid diagram}\label{fig:symbols}
\end{figure}

\begin{figure}[h]
\includegraphics[width=0.7\textwidth]{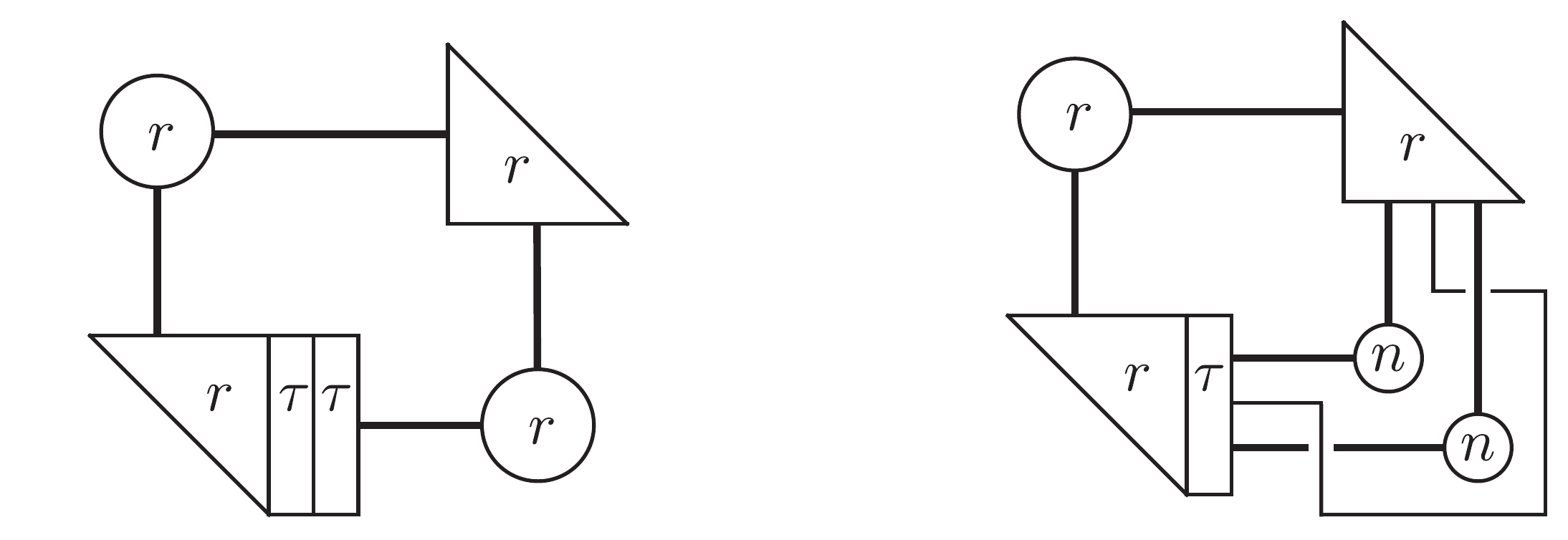}
\caption{Closures of 
$\Delta^2\tau^2$ and 
$\Delta^2\tau(\sigma_{n+1}\sigma_{n+2}\cdots\sigma_{2n})(\sigma_{2n}\sigma_{2n-1}\cdots\sigma_{n+1})$}\label{fig:conjugate}
\end{figure}

In Figure~\ref{fig:grid-moves}(a)\footnote{Same as the right part of Figure~\ref{fig:conjugate}}, we focus on the string $s$ which connect the center of $\tau$ on the right and the center of the bottom of the top-right $\Delta$.  The string $s$ is in six sticks,  horizontal and vertical. The vertical stick $s_1$ of $s$ on the right of $\tau$ is in $(r+2)$nd  position from the left and the horizontal stick $s_2$ of $s$ below $\Delta$ is in $(r+1)$st position from the top.
The string $s$ can be isotoped so that $s_1$ is moved to the right at $(n+2)$nd position from the right and then $s_2$ is moved down at $(n+3)$rd position from the bottom.  The result is shown in Figure~\ref{fig:grid-moves}(b). 
During the isotopy,  the horizontal and vertical positions of the other sticks are  adjusted accordingly.  The left half of the Figure~\ref{fig:grid-moves}(b) shows local details of that of Figure~\ref{fig:grid-moves}(a).
We move the rightmost vertical stick of $s$ in Figure~\ref{fig:grid-moves}(b) through the back of the diagram to the $(n+2)$nd position from the left. 
Then we move the bottom stick of $s$ to the top through the front of the diagram. These changes are shown  in  (c) and (d).

\begin{figure}[h]
\includegraphics[width=\textwidth]{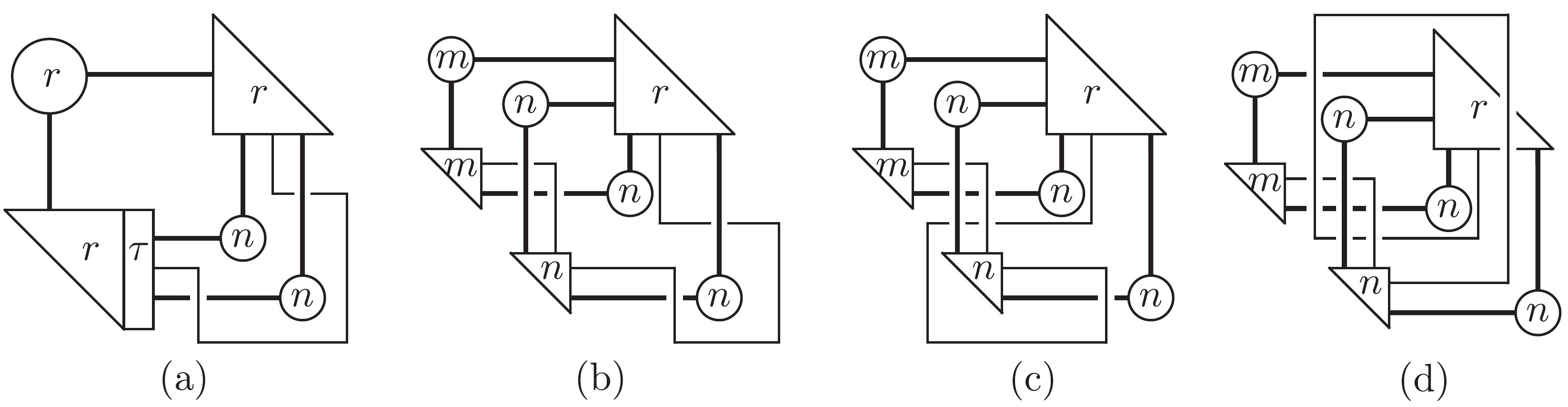}
\caption{Moving the string $s$. ($m=n+1$)}\label{fig:grid-moves}
\end{figure}

We claim that the diagram (d) is a side-view projection of a petal embedding of a $T_{r,r+2}$.  It has the following properties,  assuming that the vertical sticks are one unit apart from the neighboring ones.
\begin{enumerate}
\item There is only one vertical stick having adjacent horizontal sticks on either side. We call it the inflection stick,  denoted by $I$.  The two horizontal sticks are of length $r+1$.
\item Every vertical stick except $I$ has one adjacent horizontal stick  with length $r+1$ and the other with length $r+2$.
\end{enumerate}
These are the properties of a grid diagram associated with the  arc presentation obtained from a petal projection~\cite{Adams2015_2}.
In the diagram (d),  $I$ is the single vertical stick on top of the triangle marked with $n$. The horizontal sticks above the top of $I$ are of length $r+2$ and all others are of length $r+1$.

Now we are ready to read a petal permutation from (d).  We orient the diagram so that the top horizontal stick is oriented to the left. It is the same as the counterclockwise orientation around the obvious center.  We assign  levels $1,2,\ldots,2r+3$ to the horizontal sticks from top to bottom.  From the top horizontal stick,  read the levels of horizontal sticks as one goes along the knot.
$$\begin{aligned}
T_{r,r+2}=T_{2n+1,2n+3}&: ([1, 3n+4],[n+2,3n+3],[n+1,3n+2],\ldots,[2,2n+3],\\
&\quad [4n+5,2n+2],[4n+4,2n+1],\ldots,[3n+6,n+3],3n+5)\footnotemark
\end{aligned}$$
\footnotetext{Square brackets are inserted for the convenience of the readers.}
In the cases $n=1,2,3$,  one may read the permutations from  Figure~\ref{fig:petal_T}.

$$\begin{aligned}
T_{3,5}&: (1, 7,3,6,2,5,9,4,8)\\
T_{5,7}&: (1, 10,4,9,3,8,2,7,13,6,12,5,11)\\
T_{7,9}&: (1, 13,5,12,4,11,3,10,2,9,17,8,16,7,15,6,14)
\end{aligned}$$

\begin{figure}[h]
\includegraphics[width=0.7\textwidth]{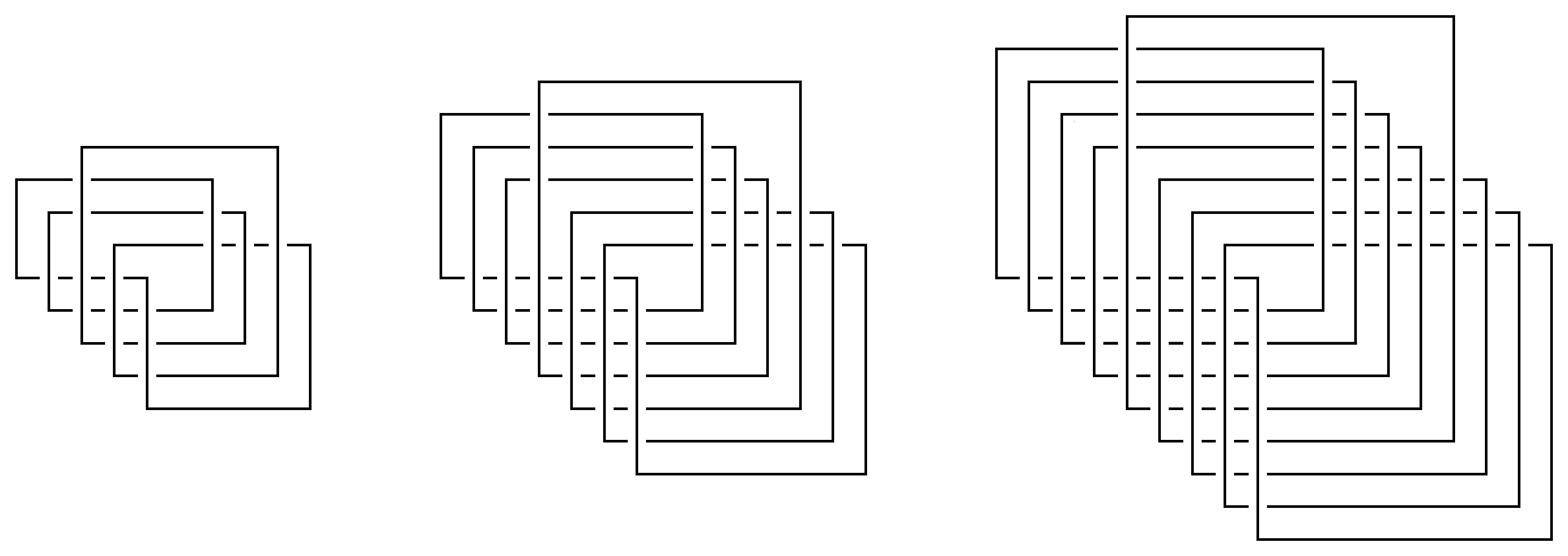}
\caption{Grid diagrams of $T_{3,5}, T_{5,7},T_{7,9}$ from which petal permutations can be read. }\label{fig:petal_T}
\end{figure}

So far we showed that $p(T_{r,r+2})\le2r+3$, completing the proof of Theorem~\ref{thm:main}.

\section*{Acknowledgements}
The first author was supported by the National Research Foundation of Korea(NRF) grant funded by the Korea government (MSIT) (2019R1A2C1005506) and the Dongguk University Research Fund.

%%=================================================
\if0
\fi

%\section*{Acknowledgements}


\begin{thebibliography}{12}
\bibitem{Adams2013}
{C. Adams, Triple crossing number of knots and links, {\em J. Knot Theory Ramifications} {\bf 22}(2) (2013) 1350006.}
\bibitem{Adams2014}
{C. Adams, Quadruple crossing number of knots and links, {\em Math. Proc. 
Cambridge Philos. Soc.} {\bf 156} (2014) 241--253.}
\bibitem{Adams2015_2}
{C. Adams, T. Crawford, B. DeMeo, M. Landry, A. T. Lin, M. Montee, S. Park, S. Venkatesh
 and F. Yhee, Knot projections with a single multi-crossing, {\em J. Knot Theory Ramifications} {\bf 24}(3) (2015) 1550011.} 
\bibitem{C}
{P. R. Cromwell, Embedding knots and links in an open book I: Basic properties, {\em Topology Appl.} {\bf 64} (1995) 37--58.}
\bibitem{EH}
{J. B. Etnyre and K. Honda, Knots and contact geometry I: Torus knots and the figure eight knot, {\em J. Symplectic Geom.} {\bf 1}(1) (2001) 63--120.}
\end{thebibliography}
\end{document}